\newtheorem{theorem}{Theorem}[section]
\newtheorem{lemma}[theorem]{Lemma}
\newtheorem{proposition}[theorem]{Proposition}
\newtheorem{corollary}[theorem]{Corollary}
\newtheorem{remark}[theorem]{Remark}
\def \RR {\mathbb{R}}
\def \l {\lambda}
\newcommand{\comment}[1]{}
\DeclareMathOperator{\tr}{Tr}
\numberwithin{equation}{section}
\begin{document}

	\title{Energy equality for the Navier-Stokes equations in weak-in-time Onsager spaces}
	
	%    \thanks will become a 1st page footnote.
	%\thanks{The first author was supported in part by NSF Grant \#000000.}

	\author{Alexey Cheskidov}

	%    Information for second author
	\author{Xiaoyutao Luo}
	%    Address of record for the research reported here
	\address{Department of Mathematics, Statistics and Computer Science,
		University of Illinois At Chicago, Chicago, Illinois 60607}
	%    Current address
	%\curraddr{Department of Mathematics, Statistics and Computer Science,University of Illinois At Chicago, Chicago, Illinois 60607}
	\email{acheskid@uic.edu,xluo24@uic.edu}
	
	\thanks{The authors were partially supported by the NSF grant DMS--1517583}
	
	%    General info
	%\subjclass[2000]{76D03, 35Q35}
	
	\date{\today}
	\begin{abstract}
		Onsager's conjecture for the 3D Navier-Stokes equations concerns the validity of energy equality of weak solutions with regards to their smoothness. In this note we establish energy equality for weak solutions in a large class of function spaces. These conditions are weak-in-time with optimal space regularity and therefore weaker than all previous classical results. Heuristics using intermittency argument suggests the possible sharpness of our results.
	\end{abstract}
	
	\maketitle
	\section{Introduction}
	We consider the three-dimensional incompressible Navier-Stokes equations (3D NSE)
	\begin{align}\label{eq:3dNSE}
	& \partial_t u+ (u\cdot \nabla )u -\nu \Delta u = -\nabla p, \nonumber \\
	& \nabla \cdot u =0 , \\
	& u(x,0) = u_0(x), \nonumber
	\end{align}
	where $u(x,t)$ is the unknown velocity, $p(x,t)$ is the scalar pressure, $\nu >0$ is the kinematic viscosity. We also restrict attention to spatial domain $\Omega = \RR^3$ or $ \mathbb{T}^3$.
	
	It is known from classical results by Leray \cite{L34} that for divergence-free initial data $u_0 \in L^2$, there exists a weak solution to \eqref{eq:3dNSE} up to a specified time $T < \infty $ so that the following energy inequality holds
	\begin{equation}\label{eq:energy_ineq}
	\|u(t) \|_2^2 + 2\nu \int_{t_0}^{t} \|\nabla u \|_2^2 \leq \|u(t_0)\|_2^2,
	\end{equation}
	for all $t \in [0,T]$ and a.e $t_0 \in [0,t]$ including $0$. Weak solutions satisfying \eqref{eq:energy_ineq} are called Leray-Hopf weak solutions. These solutions have additional regularity $L_t^2 H_x^1$ and other analytic properties. On the other hand regular solutions to the 3D NSE satisfy the corresponding energy equality:
	\begin{equation}\label{eq:energy_eq}
	\|u(t) \|_2^2 + 2\nu \int_{t_0}^{t} \|\nabla u \|_2^2 = \|u(t_0)\|_2^2.
	\end{equation}
	
	It remains an open question whether energy equality is valid for Leray-Hopf weak solutions or general weak solutions. The difference between \eqref{eq:energy_eq} and \eqref{eq:energy_ineq} is the presence of anomalous energy dissipation due to nonlinearity. This phenomenon is generally restricted to rough solutions of fluid equations \cite{E} and the famous Onsager conjecture for the 3D Euler and the 3D NSE seeks to find the threshold regularity for energy conservation. More specifically, it asserts that any weak solution that is more regular than the threshold conserves energy, and there exists a weak solution below the regularity threshold that experiences anomalous dissipation.
	
	For the 3D Euler the conjecture is basically settled. On the one hand, in \cite{CET} the energy conservation was established for weak solutions in $L^3 B^{\alpha}_{3,\infty} $, $\alpha >\frac{1}{3}$. Later on this condition has been weakened to $L^3 B^{3}_{3,c_0} $ in \cite{ccfs}, which so far is the weakest condition for the energy conservation of the 3D Euler equations. On the other hand, the existence of anomalous dissipative weak solutions with less than Onsager regularity has been proven by several authors \cite{Buc,DS2,DS3,DS4}. These methods are based on convex integration originated from work of Nash on isometry problem in differential geometry \cite{Nash}. Finally in \cite{I} Isett constructed, for any $\alpha < \frac{1}{3}$, weak solutions in $C_tC_x^\alpha$ that fail to conserve energy, closing the conjecture from the other direction.
	
 	For the 3D NSE there is still a lot left to be done. Let us review the progress so far. The energy equality was proven by Lions for weak solutions in $L^4_t L^4_x$ \cite{Lio}. Shinbrot \cite{Shin} improved upon this result, showing the energy equality under the condition $u \in L^q_t L^p_x $ such that $ \frac{2}{p} + \frac{2}{q} \leq 1$ with $ p \geq 4$. It worth noting that the result of \cite{ccfs} also implies energy equality for $u \in L^3 B^{3}_{3,\infty}$ and many previous results including \cite{Lio} and \cite{Shin} can be recovered using interpolation methods (see Section \ref{section2-2}). Recent work by Leslie and Shvydkoy \cite{LS1} established the energy equality under new $ L^q_t L^p_x $ conditions using local energy estimates.  While for the 3D Euler equations the conjecture is basically settled for both directions, the existence of a weak solution to the 3D NSE that exhibits anomalous dissipation was not known until very recently. In \cite{BV1}, the authors show the nonuniqueness and anomalous dissipation of weak solutions for the 3D NSE in $C_tH^{\beta}_x $ for some small $\beta >0$ using the technique developed from settling Onsager's conjecture for the 3D Euler equations. This motivates for a thorough investigation of the energy equality for the 3D NSE. There is still a possibility that the linear term in the 3D NSE prohibits some anomalous energy dissipation scenarios.

	\subsection{Main results}
	
	Recall that a weak solutions $u(t)$ of the 3D NSE is a weakly continuous $L^2$ valued function in the class $u\in L^2_tH^1_x$ satisfying \eqref{eq:3dNSE} in the sense of distribution.
	
	\begin{theorem}\label{theorem:weak_lbg}
		Suppose $1 \leq \beta<p \leq \infty$ are such  that $\frac{2}{p} + \frac{1}{\beta}<1$. If a weak solution $u(t)$ of the 3D NSE satisfies
		\begin{equation}\label{eq:weak_lbg}
		u \in L^{\beta,w}(0,T;B^{\frac{2}{\beta}+\frac{2}{p}-1}_{p,\infty} ),
		\end{equation}
		then $u(t)$ satisfies the energy equality on $[0,T]$.
	\end{theorem}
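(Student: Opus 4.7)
I will follow the Cheskidov--Constantin--Friedlander--Shvydkoy (CCFS) framework: frequency-localize, derive a regularized energy identity, and control the resulting energy flux using the Besov regularity, upgrading the usual strong-$L^\beta$ Hölder step to a Lorentz-space Hölder inequality in order to accommodate the weak-in-time integrability.

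Concretely, let $\Delta_q$ denote the standard Littlewood--Paley projectors and $u_{\leq N}:=\sum_{q\leq N}\Delta_q u$. Since $u$ is a weak solution, $u\in L^2_t H^1_x$ and the trajectory $t\mapsto u(t)$ is weakly $L^2$-continuous, so $u_{\leq N}$ (after a brief time mollification, which is permissible here) is an admissible test function in the weak formulation of \eqref{eq:3dNSE}. Testing against $u_{\leq N}$ gives, for all $t,t_0\in[0,T]$,
\begin{equation*}
\tfrac{1}{2}\|u_{\leq N}(t)\|_2^2 + \nu\int_{t_0}^{t}\|\nabla u_{\leq N}\|_2^2\,ds \;=\; \tfrac{1}{2}\|u_{\leq N}(t_0)\|_2^2 + \int_{t_0}^{t}\Pi_N(s)\,ds,
\end{equation*}
with flux $\Pi_N(s)=-\int(u\otimes u):\nabla u_{\leq N}\,dx$. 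As $N\to\infty$ all three non-flux terms converge to their analogues for $u$ itself, using weak $L^2$-continuity of $t\mapsto u(t)$ together with $u\in L^2_t H^1_x$. Thus energy equality reduces to showing $\int_{t_0}^{t}\Pi_N(s)\,ds\to 0$.

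To bound the flux, I decompose it via the CCFS/Bony paraproduct identity: the purely low-low contribution vanishes because $u_{\leq N}$ is divergence-free, leaving only terms that can be written, up to a standard commutator $r_N(u,u)$, as paired interactions between $u_{>N}$ and $\nabla u_{\leq N}$. Hölder in space with triple exponents $(p,p,p/(p-2))$, together with the Littlewood--Paley characterization of $B^{s}_{p,\infty}$ and Bernstein's inequality, yields a pointwise-in-time estimate of the schematic form
\begin{equation*}
|\Pi_N(s)| \;\leq\; C\,G_N(s)\,\|u(s)\|_{B^{s}_{p,\infty}}^{\beta},\qquad s:=\tfrac{2}{\beta}+\tfrac{2}{p}-1,
\end{equation*}
where the Onsager choice of exponent $s$ is exactly what makes the powers of $2^N$ coming from the frequency localization cancel, and the residual factor $G_N(s)$ must be arranged to decay as $N\to\infty$ when measured in the predual Lorentz norm $L^{\beta/(\beta-1),1}_t$. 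Interpolation with $u\in L^2_tH^1_x$ supplies the $(p/(p-2))$-integrability needed for the middle factor, and the strict inequality $\tfrac{2}{p}+\tfrac{1}{\beta}<1$ is precisely what makes the Hölder splitting admissible with a strictly positive decay rate.

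Finally, the weak-in-time hypothesis $u\in L^{\beta,\infty}(0,T;B^{s}_{p,\infty})$ is combined with the bound on $G_N$ through the Lorentz Hölder inequality $L^{\beta,\infty}\cdot L^{\beta/(\beta-1),1}\hookrightarrow L^1$, delivering $\int_{t_0}^{t}|\Pi_N(s)|\,ds\to 0$ and hence the energy equality. The principal technical obstacle---where the weak-in-time generalization ceases to be routine---is producing an estimate of $|\Pi_N|$ in which the factor multiplying $\|u\|_{B^s_{p,\infty}}^{\beta}$ genuinely belongs to the predual $L^{\beta/(\beta-1),1}_t$ rather than only to the weaker $L^{\beta/(\beta-1),\infty}_t$ (which would yield boundedness but not vanishing). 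Achieving this requires a careful organization of the Littlewood--Paley dyadic sums coupled with an interpolation between the Besov norm and $\|\nabla u\|_2$, rather than a one-line application of Hölder's inequality.
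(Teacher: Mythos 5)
Your overall framework (truncated energy balance through frequency $\lambda_q$, reduction to the vanishing of $\int_0^T\Pi_{\leq q}\,ds$, the CCFS flux estimate, and H\"older interpolation of $\|u_r\|_3$ between $\|u_r\|_2$ and $\|u_r\|_p$) matches the paper's. The gap sits exactly at the step you yourself flag as ``the principal technical obstacle'': you never actually produce the companion factor in the predual Lorentz space, nor show that it vanishes as $N\to\infty$. Two concrete problems. First, your scheme is internally inconsistent about exponents: the pairing $L^{\beta,\infty}\cdot L^{\beta/(\beta-1),1}\hookrightarrow L^1$ pairs $\|u(s)\|_{B^{s}_{p,\infty}}$ to the \emph{first} power against $G_N$, yet your displayed bound carries $\|u(s)\|_{B^{s}_{p,\infty}}^{\beta}$; in fact the flux estimate naturally produces the power $\tfrac{p}{p-2}$ (which is $<\beta$ precisely because $\tfrac{2}{p}+\tfrac{1}{\beta}<1$), so the relevant weak Lebesgue exponent for that factor is $\beta(p-2)/p$, not $\beta$. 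Second, and more seriously, the residual factor $G_N$ --- a dyadic sum of terms of the form $(\lambda_r^2\|u_r\|_2^2)^{\theta}$ controlled by the enstrophy --- naturally lies in the strong Lebesgue space $L^{q}$ dual to $L^{\beta(p-2)/p}$, not in the strictly smaller Lorentz space $L^{q,1}$ that the weak-type H\"older inequality requires for an $L^1$ conclusion. Saying that this ``requires a careful organization of the dyadic sums'' states the problem rather than solving it; as written, the argument yields at best boundedness, not convergence to zero, of the flux integral.

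The paper resolves precisely this point by a more hands-on device that your outline is missing: it splits time into the level set $E_q=\{s: \|u(s)\|_{B^{\alpha}_{p,\infty}}\geq\lambda_q^{2/\beta}\}$, of measure $\lesssim\lambda_q^{-2}$ by the weak-$L^\beta$ hypothesis, and its complement; it applies H\"older with exponents perturbed to $\beta/(1+\epsilon)$ on $E_q$ and to $\beta/(1-\epsilon)$ on the complement; and it evaluates the resulting integrals of $f^{\beta/(1\pm\epsilon)}$ via the layer-cake formula against the distribution function $\lambda_f(t)\lesssim t^{-\beta}$. The crucial feature is that the level-set threshold is tied to the frequency $q$, so the resulting factors $\lambda_q^{\mp 2\epsilon/(1\pm\epsilon)}$ exactly cancel the compensating powers $\lambda_r^{\pm 2\epsilon'}$ after summation against the kernel $\lambda_{|r-q|}^{-2/3}$, leaving a positive power of the tail enstrophy $\int\lambda_r^2\|u_r\|_2^2\,ds$, which tends to zero as $q\to\infty$. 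Without this coupling between the time level set and the frequency cutoff (or an honest proof that your $G_N$ tends to zero in $L^{q,1}$), your argument does not close.
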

	
	In view of $L_t^q L_x^p$ conditions for energy equality, \eqref{eq:type1} is weaker than the result of Shinbrot \cite{Shin}. It is worth noting that we have not improved over the endpoint Onsager space $L^3 B^{\frac{1}{3}}_{3,\infty}$. Nonetheless, the importance of the condition \eqref{eq:weak_lbg} is that it is weak-in-time with the optimal Onsager spacial regularity exponent $\frac{2}{\beta}+\frac{2}{p}-1$. In contrast, Ladyzhenskaya-Prodi-Serrin regularity conditions \cite{KL,P,S} require $\frac{2}{\beta}+\frac{3}{p}-1$ as spacial regularity in order to rule out possible blowups. 
	
	We also obtain the energy equality for various Type-I blowups as a useful corollary:
	
	\begin{corollary}\label{corollary:type-1}
		If a strong solution $u(t)$ of the 3D NSE on $[0,T)$ satisfies
		\begin{equation}\label{eq:type1}
		\| u(t) \|_{B^0_{p,\infty}} \lesssim \frac{1}{(T-t)^{\frac{1}{2}-\frac{1}{p}}}, \qquad 0< t <T,
		\end{equation}
		for some $p > 4$,
		then $u(t)$ does not lose energy at time $T$.	
	\end{corollary}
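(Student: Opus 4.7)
The plan is to reduce the statement to Theorem~\ref{theorem:weak_lbg} by matching the Type-I scaling in \eqref{eq:type1} to a weak-in-time Onsager space of the form required in \eqref{eq:weak_lbg}.

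First I would choose the appropriate integrability exponent in time. The spatial regularity index appearing in \eqref{eq:weak_lbg} is $\tfrac{2}{\beta}+\tfrac{2}{p}-1$, which vanishes precisely when $\tfrac{1}{\beta}=\tfrac{1}{2}-\tfrac{1}{p}$. Setting $\beta:=\tfrac{2p}{p-2}$, one verifies using $p>4$ that $2<\beta<p$ and $\tfrac{2}{p}+\tfrac{1}{\beta}=\tfrac{1}{2}+\tfrac{1}{p}<1$, so $(\beta,p)$ is admissible for Theorem~\ref{theorem:weak_lbg} and \eqref{eq:weak_lbg} collapses to the requirement $u\in L^{\beta,w}(0,T;B^{0}_{p,\infty})$. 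With this choice $\tfrac{1}{2}-\tfrac{1}{p}=\tfrac{1}{\beta}$, so \eqref{eq:type1} reads $\|u(t)\|_{B^{0}_{p,\infty}}\lesssim(T-t)^{-1/\beta}$; the function $t\mapsto(T-t)^{-1/\beta}$ on $(0,T)$ has distribution function $\min(T,\lambda^{-\beta})$ and therefore lies in weak $L^{\beta}(0,T)$ with quasi-norm controlled by a constant, giving the needed inclusion. Note that this step would fail in strong $L^\beta$ (by a logarithm), which is exactly why working in Lorentz classes is essential here.

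It remains to verify that Theorem~\ref{theorem:weak_lbg} applies on the closed interval $[0,T]$ rather than merely on $[0,T)$. Since $u$ is a strong solution on $[0,T)$, the classical energy equality on every $[0,t]$ with $t<T$ yields $\|u(t)\|_2\le\|u_0\|_2$ and $\nu\int_0^T\|\nabla u\|_2^2\,d\tau\le\|u_0\|_2^2$; combining this $L^\infty_tL^2_x$ boundedness with the control of $\partial_t u$ in a negative Sobolev norm coming from the equation, $u$ extends by weak continuity to a weak solution on $[0,T]$. Theorem~\ref{theorem:weak_lbg} then gives energy equality on all of $[0,T]$, whose right-hand side is continuous in $t$; in particular $\|u(t)\|_2\to\|u(T)\|_2$ as $t\to T^-$, so no energy is lost at the (potential) blow-up time. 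The only non-routine point is the arithmetic matching $\tfrac{1}{2}-\tfrac{1}{p}=\tfrac{1}{\beta}$ for the unique admissible $\beta=\tfrac{2p}{p-2}$ rendering the Besov smoothness index zero; everything else is elementary or a standard weak-continuity argument.
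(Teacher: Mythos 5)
Your proposal is correct and follows the route the paper intends: the choice $\beta=\tfrac{2p}{p-2}$ makes the Besov smoothness index $\tfrac{2}{\beta}+\tfrac{2}{p}-1$ vanish, the bound $(T-t)^{-(\frac12-\frac1p)}=(T-t)^{-1/\beta}$ places $u$ in $L^{\beta,w}(0,T;B^0_{p,\infty})$ with $\beta<p$ and $\tfrac{2}{p}+\tfrac{1}{\beta}<1$ exactly when $p>4$, and Theorem~\ref{theorem:weak_lbg} applies. Your additional care about passing from $[0,T)$ to $[0,T]$ and the observation that strong $L^\beta$ would fail by a logarithm are both accurate and consistent with the paper's discussion in Section~\ref{section2-2}.
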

	We note that the classical Type-I blowup $\|u(t) \|_\infty \lesssim \frac{1}{ \sqrt{T-t}  }$, for which the loss of energy was ruled out by a recent result by Leslie and Shvydkoy \cite{LS1}, is covered by \eqref{eq:type1}. In addition, when $p < \infty$, condition \eqref{eq:type1} is weaker than the critical 3D NSE scaling reflected in the following upper bound if a blowup at time $T$ occurs:
	\begin{equation} \label{eq:Lpupperbound}
	\| u(t) \|_p \gtrsim \frac{1}{(T-t)^{\frac{1}{2}- \frac{3}{2p}}}, \qquad p > 3.
	\end{equation}
	Remarkably, the worst intermittency dimension for \eqref{eq:weak_lbg} and \eqref{eq:type1} is $d=1^{-}$ in contrast to
	regularity criteria, such as \eqref{eq:Lpupperbound}, where $d=0$ is the worst case scenario (more on this in Section \ref{section2}). 
	
	%This is due to the effect of intermittency (more on this in Section \ref{section2}). Heuristically, taking the intermittency dimension $d \in [0,3]$ into account, one should expect 
	%$$
	%\| u(t) \|_p \sim \frac{1}{(T-t)^{\frac{1}{2}- \frac{3-d}{2p}}}.
	%$$
	%Roughly speaking, one should expect $d=1^-$ when $\frac{2}{p} + \frac{1}{\beta} < 1$ and $d=0$ when $\frac{2}{p} + \frac{1}{\beta} > 1$.

	Our last result extends previous conditions in the regime $\frac{2}{p} + \frac{1}{\beta} \geq 1$. Unlike Theorem \ref{theorem:weak_lbg}, the scaling of this result corresponds to $d=0$ and thus the spacial regularity is different than that of Theorem \ref{theorem:weak_lbg}.
	
	\begin{theorem}\label{theorem:type2}
		Suppose $1 \leq p \leq \infty$, $0 < \beta \leq 3$ so that $\frac{2}{p} + \frac{1}{\beta} \geq 1$. If a weak solution $u(t)$ of the 3D NSE satisfies
		\begin{equation}\label{eq:type2}
		u \in  L^\beta\big(0,T;   B^{\frac{5}{2\beta} + \frac{3}{p} - \frac{3}{2} }_{p,\infty} \big) ,
		\end{equation}
		then $u(t)$ satisfies energy equality on $[0,T]$.
	\end{theorem}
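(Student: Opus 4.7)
\emph{Proof plan.} The plan is to adapt the Littlewood-Paley framework for energy equality to the viscous setting, exploiting the dissipation as an additional resource complementing the Besov hypothesis. Let $\Delta_q$ denote the dyadic Littlewood-Paley projectors and set $u_{\leq Q}=\sum_{q\leq Q}\Delta_q u$. Testing \eqref{eq:3dNSE} against $u_{\leq Q}$ yields
\begin{equation*}
\|u_{\leq Q}(t)\|_2^2 + 2\nu\int_0^t\|\nabla u_{\leq Q}\|_2^2\,ds = \|u_{\leq Q}(0)\|_2^2 - 2\int_0^t \Pi_Q(s)\,ds,
\end{equation*}
with the commutator-type flux
\begin{equation*}
\Pi_Q = \int \bigl[(u\otimes u)_{\leq Q} - u_{\leq Q}\otimes u_{\leq Q}\bigr]:\nabla u_{\leq Q}\,dx.
\end{equation*}
The non-flux terms converge to the expected limits as $Q\to\infty$ by weak continuity in $L^2$ and the $L^2 H^1$ regularity, so the proof reduces to showing $\int_0^T\Pi_Q(s)\,ds\to 0$.

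Using frequency supports, I would first split
\begin{equation*}
\Pi_Q = \int (u-u_{\leq Q})\otimes u:\nabla u_{\leq Q}\,dx + \int u_{\leq Q}\otimes (u-u_{\leq Q}):\nabla u_{\leq Q}\,dx,
\end{equation*}
and bound both pieces by trilinear H\"older with $\|\nabla u_{\leq Q}\|_2$ as a third factor, so that the viscous dissipation can be deployed. Bernstein combined with the Besov hypothesis gives $\|u-u_{\leq Q}\|_{L^a}\lesssim 2^{-Q\lambda}\|u\|_{B^s_{p,\infty}}$ for a suitable $\lambda>0$, while the middle factor $\|u\|_{L^b}$ is handled by Besov embedding or by interpolation between $B^s_{p,\infty}$ and $L^2$. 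The resulting pointwise-in-time bound has the schematic form
\begin{equation*}
|\Pi_Q(t)|\lesssim 2^{-Q\lambda}\,\|u(t)\|_{B^s_{p,\infty}}^A\,\|u(t)\|_2^B\,\|\nabla u_{\leq Q}(t)\|_2^C,
\end{equation*}
with trilinear exponents $A+B+C=3$.

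Time integration proceeds by H\"older in $t$ with $L^\beta_t$ on the Besov factor, $L^\infty_t$ on the $L^2$ factor, and $L^2_t$ on the dissipation factor; the compatibility relation $A/\beta + C/2=1$ together with the dimensional requirement $\lambda>0$ pins down both the regime $\frac{2}{p}+\frac{1}{\beta}\geq 1$ and the critical exponent $s=\frac{5}{2\beta}+\frac{3}{p}-\frac{3}{2}$. Sending $Q\to\infty$ by dominated convergence completes the proof.

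The main obstacle is the dimensional bookkeeping in the trilinear estimate: the ``extra'' $\frac{1}{2\beta}$ of Besov regularity required beyond the exponent $\frac{2}{\beta}+\frac{2}{p}-1$ of Theorem \ref{theorem:weak_lbg} is precisely the cost of using $\|\nabla u_{\leq Q}\|_2$ from the viscous dissipation as a third trilinear factor instead of exploiting spatial intermittency. In intermittency-dimension language this is the $d=0$ regime, in which every Littlewood-Paley block must carry its full Bernstein weight. Secondary difficulties include matching the scaling at the boundary $\frac{2}{p}+\frac{1}{\beta}=1$ where the present result meets the regime of Theorem \ref{theorem:weak_lbg}, and the endpoint $\beta=3$ where the time integrability becomes marginal.
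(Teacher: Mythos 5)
Your overall skeleton (truncated energy balance, reduction to $\int_0^T\Pi_Q\,ds\to 0$, blockwise H\"older--Bernstein estimates, dominated convergence) is the right one, but the specific trilinear allocation you propose does not close at the stated exponent: the decisive point is that the viscous dissipation \emph{cannot} be used as one of the three factors here. At the worst-case intermittency $d=0$, a block $u_r$ with $\|u_r\|_2=X$ contributes $\lambda_r\|u_r\|_3^3\sim\lambda_r^{5/2}X^3$ to the flux, while your three resources give, per factor, $\lambda_r^{s+\frac{3}{2}-\frac{3}{p}}X=\lambda_r^{\frac{5}{2\beta}}X$ (Besov, time cost $\frac{1}{\beta}$), $\lambda_r X$ (dissipation, time cost $\frac{1}{2}$), and $X$ (energy, time cost $0$). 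With $A+B+C=3$ and your compatibility relation $\frac{A}{\beta}+\frac{C}{2}=1$, the total spatial gain is $\frac{5A}{2\beta}+C=\frac{5}{2}\bigl(\frac{A}{\beta}+\frac{2C}{5}\bigr)\leq \frac{5}{2}$, with equality only when $C=0$ and $A=\beta$. Any $C>0$ leaves a shortfall of $C/4$ powers of $\lambda_r$, so your claimed prefactor $2^{-Q\lambda}$ with $\lambda>0$ is actually $2^{+QC/4}$ and the estimate diverges; with one full dissipation factor ($C=1$) the scheme only yields the theorem for $s\geq \frac{3}{\beta}+\frac{3}{p}-\frac{3}{2}$, i.e.\ it loses exactly the $\frac{1}{2\beta}$ you mention. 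That quantity is the \emph{penalty} for invoking the dissipation, not a price the theorem's exponent has already paid.

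The forced choice $C=0$, $A=\beta$, $B=3-\beta$ is precisely the paper's proof: starting from the flux estimate \eqref{eq:fluxatq_est}, one bounds $\lambda_r\|u_r\|_3^3\lesssim \|u_r\|_2^{3-\beta}\,\|u_r\|_p^{\beta}\,\lambda_r^{\frac{5}{2}+\frac{3\beta}{p}-\frac{3\beta}{2}}$ by H\"older and Bernstein (this is where $\beta\leq 3$ and $\frac{2}{p}+\frac{1}{\beta}\geq 1$ enter), absorbs $\|u_r\|_2^{3-\beta}$ into the bounded energy, and observes that what remains is dominated by $\|u(s)\|_{B^{s}_{p,\infty}}^{\beta}\in L^1_t$, so the Dominated Convergence Theorem finishes. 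Note that this also explains why no H\"older inequality in time is needed at all, and hence why the range $0<\beta<1$ (where $L^\beta$ is not normed and your time-H\"older step would be delicate) causes no difficulty. To salvage your plan, drop the $\|\nabla u_{\leq Q}\|_2$ factor entirely and let the Bernstein bookkeeping on the $L^3$ norms of the blocks carry the full power of $\lambda_r$.
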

	
	Note that only the regime $0 <\beta < 1$ in \eqref{eq:type2} is new since in this case $L^\beta $ is not a normed space and hence one can not use interpolation technique.

	The rest of the note is organized as follows. In Section \ref{section2} we give some heuristics using the intermittency dimension to show the sharpness of our result and summarize previous works on the conditions for the energy equality. Section \ref{section3} is devoted to preliminaries and tools we used, mainly the Littlewood-Paley theory and estimates involving the energy flux. Finally, we prove the main results in Section \ref{section4}.

	\section{Heuristics and comparison with previous results}\label{section2}
	\subsection{Heuristics}
	Consider the following scenario
	for the anomalous energy dissipation. Assume that at each time $t$, the total energy $E =\|u(t) \|_2^2$ is concentrated in a dyadic shell of radius $\l(t)$ in the Fourier space. If the energy is
	of order one, the time it takes for it to transfer to the shell of radius $2\l$ is
	\[
	T = \frac{\text{Energy}}{\text{Flux}}.
	\]
	Assuming the flow has intermittency dimension $d\in [0,3]$, namely
	$$
	\|u \|_2 \sim \lambda^{\frac{3-d}{2} } \| u\|_\infty,
	$$
	it follows that 
	$$
	\text{Flux} \sim  \l^{\frac{5-d}2} E^{3/2} .
	$$ 
	This implies that
	\[
	\l(T^*-T)\sim (TE^{\frac12})^{\frac{2}{d-5}},
	\]
	where $T^*$ is the time of blow-up.
	No note that the range $(1,3]$ for the intermittency dimension $d$ is eliminated 
	because the linear term dominates in that regime and hence the solution is regular
	and has to satisfy the energy equality. Heuristically, the linear and
	nonlinear terms scale as
	\[
	L =\text{Enstrophy}=  \l^2 E, \qquad N=\text{Flux} = \l^{\frac{5-d}{2}} E^\frac{3}{2}.
	\]
	Hence,
	\[
	L > N, \quad \text{provided } d>1.
	\]
	Moreover, one can actually exclude the case $d=1$ by noticing that
	the enstrophy behaves as
	\[
	\text{Enstrophy} = \|u(T^*-T)\|_{H^\alpha}^2 \sim \l^2E \sim T^{\frac{4}{d-5}},
	\]
	which is not integrable when $d\geq 1$. Here we assumed that the energy $E$ is of order one, i.e., some chunk of energy
	escapes to the infinite wavenumber. This suggests that the range for the intermittency dimension is $d \in [0,1)$.
	
	Now we can compute the speed with which various norms are allowed to blow up, for instance
	\begin{equation} \label{eq:typeIenergyHs}
	\|u(T^*-T)\|_{H^\alpha}  \sim \l^{ \alpha} \sqrt{E} \sim T^{\frac{2\alpha}{d-5}}.
	\end{equation}
	Optimizing this over $d\in[0,1)$ we obtain that the extreme intermittency $d=0$ is the the ``worst'' (which is usually the case),
	and the condition $u \in L^{\frac{5}{2s}}_t H^\alpha_x$
	should imply the energy equality. In particular, we can see the familiar scaling $u \in L^{3}_t H^{\frac{5}{6}}_x$ when $\alpha=5/6$.
	
	This heuristics becomes more surprising when we look at $L^p$-based spaces. In particular, for $L^\infty$-based spaces we have
	\[
	\|u(T^*-T)\|_{B^{\alpha}_{\infty,\infty}} \sim \l^{\alpha+\frac{3-d}{2}} \sqrt{E} \sim T^{\frac{2\alpha+3- d}{d-5}}.
	\]
	Optimizing this over $d\in[0,1)$  again, we obtain that the intermittency $d$ near $1$ is the the ``worst'', which is unusual.

	In general, for $L^p$-based spaces we have (interpolating between $L^2$ and $L^\infty$)
	\[
	\|u(T^*-T)\|_{B^{\alpha}_{p,\infty}} \sim \l^{ \alpha+(1-\frac{2}{p}) \frac{3-d}{2}  } E \sim T^{f(\alpha,p,d)},
	\]
	where
	\begin{equation} \label{eq:f}
	f(\alpha,p,d) =  \frac{2\alpha+(1-\frac{2}{p})(3-d)}{d-5}.
	\end{equation}
	To find the ``worst'' value of the intermittency dimension $d$, we must ask the following
	question: What is the smallest possible value of the $B^\alpha_{p, \infty}$-norm at time $T^*-T$ 
	so that
	the loss of energy can still occur at time $T^*$? Observe that $\frac{\partial}{\partial d} f$ has the
	same sign as $1-\frac{2}{p}-\alpha$. Therefore,
	\begin{align*}
	d &=0 \text{ is the worst intermittency dimension for } \alpha>1-\frac{2}{p}, \\
	d &= 1^- \text{ is the worst intermittency dimension for } \alpha<1-\frac{2}{p}. 
	\end{align*}
	Then the following optimal time integrability exponent $\beta=-\frac{1}{f}$ can be obtained from \eqref{eq:f}:
	\[
	\frac{1}{\beta}=
	\left\{
	\begin{split}
	& \frac{\alpha}{2}+\frac{1}{2}-\frac{1}{p}, \quad  &\text{when} \quad \alpha\leq1-\frac{2}{p},\\
	& \frac{2}{5}\alpha+\frac{3}{5}-\frac{6}{5p}, \quad  &\text{when} \quad \alpha>1-\frac{2}{p}.
	\end{split}
	\right.
	\]
	
	In what follows we often use $p$ and $\beta$ to parametrize different cases instead of using $\alpha$ and $p$. Simple algebra shows that 
	\begin{align*}
	\alpha < 1 - \frac{2}{p}   \Leftrightarrow \frac{2}{p} +  \frac{1}{\beta} <1 \quad \text{and} \quad \alpha \geq 1 - \frac{2}{p}    \Leftrightarrow \frac{2}{p} +  \frac{1}{\beta} \geq 1 .
	\end{align*}
	
	\subsection{Comparison with previous works}\label{section2-2}
	If a weak solution of the 3D NSE belongs to the Onsager space $L^3 B^{\frac{1}{3}}_{3,\infty}$, then it satisfies the energy equality. This follows from the estimate on the energy flux done in \cite{ccfs}
	and implies classical results on energy equality, such as \cite{Lio,Shin},  via interpolation with the energy class $L_t^\infty L_x^2 \cap L_t^2 H_x^1$. We provide a concise argument below.
	
	We will determine the range of parameters $1 \leq \beta \leq \infty  $, $ 1 \leq p\leq  \infty$, and $\alpha \in \mathbb{R}$ so that the following embeddings hold:
	$L^\beta B^{\alpha}_{p,\infty} \cap L^2 H^1 \cap L^\infty L^2 \subset L^\frac{1}{3} B^{\alpha'}_{p',q'} \subset  L^\frac{1}{3} B^{\frac{1}{3}}_{3,\infty}$. The goal is to find the minimal space regularity exponent $\alpha$ given $\beta$ and $p$. In view of H\"older interpolation in time and Besov interpolation in space, for $x$ and $y$ satisfying $0  \leq x+y \leq 1, \, 0\leq  x\leq 1 ,\, 0\leq  y \leq 1 $ we have the following relations:
	\begin{align}
	\frac{1}{3} &= x \cdot \frac{1}{\beta} + y \cdot  \frac{1}{2}+(1-x-y) \frac{1}{\infty}, \label{1} \\
	\frac{1}{p'} &= x \cdot \frac{1}{p} + y \cdot  \frac{1}{2}+(1-x-y) \frac{1}{2}, \label{2}\\
	\alpha' &=x \cdot \alpha + y \cdot  1, \label{3}\\
	\alpha' & \geq 3\left(\frac{1}{p'} -\frac{1}{3}\right) +\frac{1}{3}. \label{4}
	\end{align}
	After substitutions we find $\alpha \geq (\frac{3}{p} +\frac{2}{\beta} -\frac{3}{2} ) + \frac{1}{6x} $. So to find the minimal $\alpha$ we need to determine the range of $x$. 
	
	First, $p' \leq 3$ in order to make sure that the Besov embedding $ B^{\alpha'}_{p',q'} \subset   B^{\frac{1}{3}}_{3,\infty}$ holds, which is equivalent to $ \frac{1}{x} \geq 3- \frac{6}{p}$ due to \eqref{2}. Second, the inequality $ x+y \leq 1$ is equivalent to $ \frac{1}{x} \geq 3-\frac{6}{\beta}$, and $y \geq 0$ is equivalent 
	to $ \frac{1}{x} \geq  \frac{3}{\beta} $ thanks to \eqref{1}. 
	
	Therefore given time integrability $\beta$ and space integrability $p$, the minimal spacial regularity exponent that implies the energy equality is 
	\[ 
	\alpha =
	\begin{cases} 
	\frac{2}{\beta} + \frac{2}{p} -1 & \beta \geq 3 \, \text{and } \, p \geq \beta,  \\
	\frac{1}{\beta} + \frac{3}{p} -1 & \beta \geq 3 \, \text{and } \,  p \leq \beta,  \\
	\frac{5}{2\beta} + \frac{3}{p} -\frac{3}{2} & \beta \leq 3 \, \text{and } \, \frac{1}{\beta}+ \frac{2}{p}  \geq 1,  \\
	\frac{2}{\beta} + \frac{2}{p} -1 & \beta \leq 3 \, \text{and } \, \frac{1}{\beta}+ \frac{2}{p}  \leq 1.  \\
	\end{cases}
	\]
	
	We can summarize classical results as follows.
	\begin{figure*}[hb]
		\begin{tikzpicture}
		\draw [<->] (0,5) node[left]{$ \frac{1}{p}$}  --(0,0)  -- (5,0) node[below]{$ \frac{1}{\beta} $};
		\draw  (1.33, 0) -- node[below]{$ \frac{1}{3} $} (1.33, -0.1); 
		%\draw  (2, 0) -- node[below]{$ \frac{1}{2} $} (2, -0.1); 
		\draw  (4, 0) -- node[below]{$ 1 $} (4, -0.1); 
		\draw  (-0.1, 4) -- node[left]{$ 1 $} (0, 4);
		\draw  (-0.1, 2) -- node[left]{$ \frac{1}{2} $} (0, 2);
		\draw  (-0.1, 1.33) -- node[left]{$ \frac{1}{3} $} (0, 1.33); 
		\draw   (0,0) --(1.33,1.33)  ; \draw   (0,4) --(4,4) --(4,0) ;
		\draw [dashed] (0,2)  -- (1.33,1.33) ; \draw   (1.33,1.33)  -- (4,0) ;   
		\draw [dashed] (1.33,0)  -- (1.33,1.33) ; \draw   (1.33,1.33)  -- (1.33,4) ;   
		\draw [->] (4.5,1.5)node[right]{$\frac{2}{p}+\frac{1}{\beta} = 1$} -- (2.8,0.7)  ;
		\draw [dashed] (0,1.33) -- (2,1.33) ;
		\fill[pattern=north east lines,opacity=0.5] (1.33,1.33)-- (0,0) --(4,0)-- (1.33,1.33) ;
		\fill[opacity=0.2] (0,4)-- (0,0) -- (1.33,1.33) --(1.33,4);
		\draw[->]    (2,-0.5)node[right]{ $\alpha = \frac{2}{\beta}+ \frac{2}{p}- 1$ } --(1.5,0.5) ;
		\draw[->]   (1.2,4.2) node[above]{$\alpha = \frac{1}{\beta}+ \frac{3}{p}- 1$} -- (0.6,2.8) ;
		\draw    (1.3,2.5)node[right]{$\alpha = \frac{5}{2\beta}+ \frac{3}{p}- \frac{3}{2}$};
		\end{tikzpicture}
		\caption{Regions for Lemma \ref{lemma:classical}}
		\label{fig:classical1}
	\end{figure*}
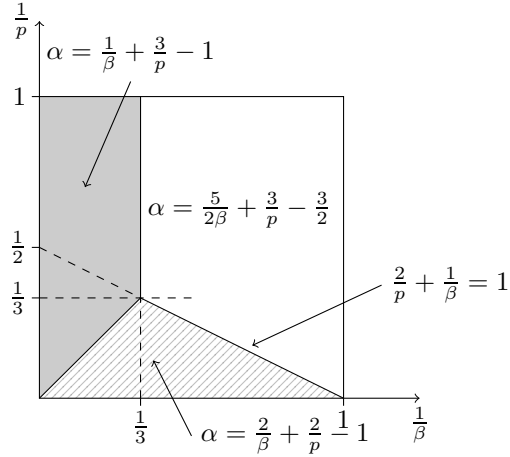
	
	\begin{lemma}[Classical results]\label{lemma:classical}
		If $u(t)$ is a weak solution to the 3D NSE satisfying either one of following three conditions
		\begin{align}
		u &\in L^\beta B^{\frac{2}{\beta} + \frac{2}{p} -1 }_{p,\infty} \quad \text{for some} \quad \frac{1}{\beta}+ \frac{2}{p} \leq 1 \text{ and }  p\geq \beta , \label{eq:previous_energy1} \\
		u &\in L^\beta B^{\frac{5}{2\beta}+ \frac{3}{p}-\frac{3}{2}}_{p,\infty}  \quad \text{for some} \quad  \frac{1}{\beta}+ \frac{2}{p} \geq 1, \ 1 \leq \beta \leq 3, \text{ and } p\geq1, \label{eq:previous_energy2} \\
		u &\in L^\beta B^{\frac{1}{ \beta}+ \frac{3}{p}-1 }_{p,\infty}  \quad \text{for some}\quad  \beta \geq 3 \text{ and }  1\leq p   \leq \beta \label{eq:previous_energy3}
		\end{align}
		then $u$ satisfies energy equality.
	\end{lemma}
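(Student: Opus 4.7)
The strategy is to reduce all three cases to the endpoint Onsager result of \cite{ccfs}, which gives energy equality for weak solutions in $L^3 B^{1/3}_{3,\infty}$. Since every weak solution automatically lies in the energy class $L^\infty_t L^2_x \cap L^2_t H^1_x$, I would combine the hypothesized space $L^\beta B^\alpha_{p,\infty}$ with these two ambient spaces via H\"older interpolation in time and real interpolation in the Besov scale, and then invoke the Besov embedding into $B^{1/3}_{3,\infty}$.

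Concretely, introduce weights $x,y \geq 0$ with $x+y \leq 1$ so that $x$ is the weight on $L^\beta B^\alpha_{p,\infty}$, $y$ is the weight on $L^2 H^1$, and $1-x-y$ is the weight on $L^\infty L^2$. H\"older in time together with Besov interpolation and embedding produces $u \in L^{1/3}_t B^{1/3}_{3,\infty}$ whenever the compatibility identities \eqref{1}, \eqref{2}, \eqref{3} and the Besov embedding inequality \eqref{4} are simultaneously satisfied. It thus suffices to exhibit, for each of the three hypotheses, admissible $x, y$ achieving the claimed $\alpha$. The discussion preceding the lemma already carries out the algebra: the constraint reduces to $\alpha \geq \left(\tfrac{3}{p} + \tfrac{2}{\beta} - \tfrac{3}{2}\right) + \tfrac{1}{6x}$, so one wants $x$ as large as possible subject to the three upper bounds $\tfrac{1}{x} \geq \tfrac{3}{\beta}$ (from $y \geq 0$), $\tfrac{1}{x} \geq 3 - \tfrac{6}{\beta}$ (from $x + y \leq 1$), and $\tfrac{1}{x} \geq 3 - \tfrac{6}{p}$ (from $p' \leq 3$ via \eqref{2}).

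Which of these three bounds is binding partitions the $(1/\beta, 1/p)$-plane into the three regions depicted in Figure \ref{fig:classical1}; substituting the extremal $x$ in each region yields exactly the exponents $\frac{2}{\beta} + \frac{2}{p} - 1$, $\frac{5}{2\beta} + \frac{3}{p} - \frac{3}{2}$, and $\frac{1}{\beta} + \frac{3}{p} - 1$ of \eqref{eq:previous_energy1}--\eqref{eq:previous_energy3}. The only real subtlety is keeping the second Besov index equal to $\infty$ throughout the interpolation and handling the degenerate boundary cases (e.g., $\beta = 3$, $y = 0$, or $x + y = 1$) where one weight drops out and a direct embedding replaces H\"older. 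Beyond this bookkeeping, no genuine obstacle is expected: the lemma is essentially a geometric exercise layered on top of the $L^3 B^{1/3}_{3,\infty}$ energy equality of \cite{ccfs}.
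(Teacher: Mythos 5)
Your proposal is correct and follows essentially the same route as the paper: the text preceding the lemma is the paper's proof, and it carries out exactly the interpolation you describe --- H\"older in time and Besov interpolation/embedding against the energy class $L^\infty L^2 \cap L^2 H^1$ with weights $x,y$, reducing to the $L^3 B^{1/3}_{3,\infty}$ result of \cite{ccfs} and optimizing $\alpha \geq \left(\tfrac{3}{p}+\tfrac{2}{\beta}-\tfrac{3}{2}\right)+\tfrac{1}{6x}$ over the same three constraints on $\tfrac1x$. (Minor note: the time exponent of the target space should read $L^3$, not $L^{1/3}$, though you have merely reproduced the paper's own notational slip.)
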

	
	\begin{remark}
		Taking $\alpha =0$ and $\beta=p=4$ we can see the $L_t^4 L_x^4$ result by Lions \cite{Lio}. Moreover, if $u \in L_t^q L_x^p$ with $\frac{2}{q}+\frac{2}{p}=1$ and $p \geq 4$, then automatically $u \in L^{\beta} B^0_{p,\infty}$ for $\beta=\frac{2p}{p-2}\leq p$. Thus Lemma \ref{lemma:classical} recovers the result of Shinbrot \cite{Shin} (see Figure \ref{fig:classical1}).
	\end{remark}

	\begin{figure*}[hb]
		\centering
		\begin{minipage}{0.48\textwidth}
			\centering
			\begin{tikzpicture}
			\draw [<->] (0,5) node[left]{$ \frac{1}{p}$}  --(0,0)  -- (5,0) node[below]{$ \frac{1}{\beta} $};
			\draw  (1.33, 0) -- node[below]{$ \frac{1}{3} $} (1.33, -0.1); 
			%\draw  (2, 0) -- node[below]{$ \frac{1}{2} $} (2, -0.1); 
			\draw  (4, 0) -- node[below]{$ 1 $} (4, -0.1); 
			\draw  (-0.1, 4) -- node[left]{$ 1 $} (0, 4);
			\draw  (-0.1, 2) -- node[left]{$ \frac{1}{2} $} (0, 2);
			\draw  (-0.1, 1.33) -- node[left]{$ \frac{1}{3} $} (0, 1.33); 
			\draw [dashed]  (0,0) --(1.33,1.33)  ;
			\draw [dashed] (0,2)  -- (1.33,1.33) ; \draw [dashed]   (1.33,1.33)  -- (4,0) ;   
			\draw [dashed] (1.33,0)  -- (1.33,1.33) ; \draw [dashed]  (1.33,1.33)  -- (1.33,4) ;   
			\draw [->] (4,1.5)node[right]{$\frac{2}{p}+\frac{1}{\beta} = 1$} -- (2.8,0.7)  ;
			\draw [dashed] (0,1.33) -- (1.33,1.33) ;
			\fill[pattern=north east lines,opacity=0.5] (1.33,1.33)-- (0,0) --(4,0)-- (1.33,1.33) ;
			
			\draw[->]    (2.7,2.5)node[above]{ $L^{\beta,w} B^{\frac{2}{\beta} + \frac{2}{p} -1 }_{p,\infty} $ } --(1.6,0.5) ;			 
			\end{tikzpicture}
			\caption{Regions for Theorem \ref{theorem:weak_lbg}}
			\label{fig:weak_lbg}
		\end{minipage}	
		\begin{minipage}{0.48\textwidth}
			\centering
			\begin{tikzpicture}
			\draw [<->] (0,5) node[left]{$ \frac{1}{p}$}  --(0,0)  -- (5,0) node[below]{$ \frac{1}{\beta} $};
			\draw  (1.33, 0) -- node[below]{$ \frac{1}{3} $} (1.33, -0.1); 
			%\draw  (2, 0) -- node[below]{$ \frac{1}{2} $} (2, -0.1); 
			\draw  (4, 0) -- node[below]{$ 1 $} (4, -0.1); 
			\draw  (-0.1, 4) -- node[left]{$ 1 $} (0, 4);
			\draw  (-0.1, 2) -- node[left]{$ \frac{1}{2} $} (0, 2);
			\draw  (-0.1, 1.33) -- node[left]{$ \frac{1}{3} $} (0, 1.33); 
			\draw [dashed]  (0,0) --(1.33,1.33)  ;
			\draw [dashed] (0,2)  -- (1.33,1.33) ; \draw    (1.33,1.33)  -- (4,0) ;   
			\draw [dashed] (1.33,0)  -- (1.33,1.33) ; \draw  (1.33,1.33)  -- (1.33,4) -- (5,4) ;   
			\draw [->] (2.5,-0.3)node[below]{$\frac{2}{p}+\frac{1}{\beta} = 1$} -- (2.7,0.6)  ;
			\draw [dashed] (0,1.33) -- (1.33,1.33) ;
			\fill[opacity=0.1] (1.33,4)--(1.33,1.33)-- (4,0) --(5,0)-- (5,4) ;
			
			\draw    (3.5,2)node[above]{ $L^{\beta} B^{\frac{5}{2\beta} + \frac{3}{p} -\frac{3}{2} }_{p,\infty} $ }   ;			 
			\end{tikzpicture}
			\caption{Regions for Theorem \ref{theorem:type2}}
			\label{fig:type2}
		\end{minipage}	
	\end{figure*}

	It is clear that Theorem \ref{theorem:weak_lbg} improves  classical results in the interior of the region where $\alpha = \frac{2}{\beta}+ \frac{2}{p}- 1 $ (See Figure \ref{fig:weak_lbg}). In particular, if
	$u \in L^{q,w} L^p$ with $\frac{2}{q}+\frac{2}{p}=1$ and $p > 4$, then $u \in L^{\beta} B^0_{p,\infty}$ for $\beta=\frac{2p}{p-2}< p$, and hence our condition \ref{eq:weak_lbg}
	is satisfied. Thus Theorem \ref{theorem:weak_lbg} extends the result of Shinbrot \cite{Shin} to weak-in-time Lebesgue spaces.
	In addition Figure \ref{fig:type2} shows that Theorem \ref*{theorem:type2} extends the condition \eqref{eq:previous_energy2} to the regime where $0 < \beta <1$. 
	
	It is worth noting that in \cite{LS1} the authors were able to obtain better scaling (better space regularity exponent) than Figure \ref{fig:classical1} in a small region where $\alpha = \frac{1}{\beta}+ \frac{3}{p}- 1$ for strong solutions up to the first time of blowup. However at the moment it seems that Figure \ref{fig:classical1} is optimal for general weak solutions in terms of space regularity exponent.

	\section{Preliminaries}\label{section3}
	
	\subsection{Notations}
	We denote by $A \lesssim B$ an estimate of the form $A \leq CB $ with some
	absolute constant $C$, and by $A \sim B$ an estimate of the form $C_1B \leq  A \leq  C_2B $ with some absolute constants $C_1$, $C_2$. We write $\| \cdot \|= \|\cdot \|_{L^p} $ for Lebesgue norms. The symbol $(\cdot,\cdot)$ stands for the $L^2$-inner product and $L^{\beta,w}$ stands for weak Lebesgue spaces. For any $p\in \mathbb{N}$ we let $\lambda_p=2^p$ be the standard dyadic number.
	
	\subsection{Littlewood-Paley decomposition}
	We briefly introduce a standard Littlewood-Paley decomposition. For a detailed background on harmonic analysis we refer to \cite{Ca}. Let $\chi : \mathbb{R}^+ \rightarrow \mathbb{R}$ be a smooth function so that $\chi(\xi) =1$ for $\xi \leq \frac{3}{4}$, and $\chi(\xi) =0$ for $\xi \geq 1$. We further define $\varphi(\xi)=\chi(\lambda_1^{-1 }\xi) -\varphi(\xi) $  and $\varphi_q(\xi) = \varphi(\lambda_q^{-1}\xi)$. For a tempered distribution vector field $u$ let us denote
	\begin{align*}
	u_q = \mathcal{F}^{-1}(\varphi_q)*u \quad \text{for } q>-1,  \qquad u_{-1} = u_q = \mathcal{F}^{-1}(\chi)*u,
	\end{align*}
	where $\mathcal{F}$ is the Fourier transform. With this we have $u=\sum_{q \geq -1}u_q$ in the sense of distribution.
	
	We use the following version of Bernstein's inequality.
	
	\begin{lemma}
		Let  $r \geq s \geq 1$. For any tempered distribution $u \in \mathcal{S}(\RR^3)$
		$$
		\|u_q \|_r \lesssim \lambda_q^{3(\frac{1}{s} - \frac{1}{r})} \|u_q \|_s
		$$
		holds for any $ -1 \leq q   \in \mathbb{Z} $, where the positive implicit constant is universal and independent of $q$.
	\end{lemma}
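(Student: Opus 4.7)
The statement is the standard Bernstein inequality for frequency-localized functions, so the plan is to reduce it to Young's convolution inequality after exploiting that $\widehat{u_q}$ is supported in a set of diameter $\sim \lambda_q$. Concretely, I would construct an auxiliary smooth cutoff $\widetilde{\varphi} \in C_c^{\infty}(\RR^3)$ with $\widetilde{\varphi} \equiv 1$ on the support of $\varphi$ (and on the support of $\chi$ for the low-frequency block, so the same argument handles the $q=-1$ case), set $\widetilde{\varphi}_q(\xi) = \widetilde{\varphi}(\lambda_q^{-1}\xi)$, and observe that by construction
\[
u_q \;=\; \mathcal{F}^{-1}(\widetilde{\varphi}_q) * u_q,
\]
because $\widetilde{\varphi}_q \equiv 1$ on $\supp \widehat{u_q}$.

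Next, I would pick the Young exponent $t \in [1,\infty]$ defined by $\tfrac{1}{t} + \tfrac{1}{s} = 1 + \tfrac{1}{r}$; the hypothesis $r \geq s \geq 1$ guarantees $t \in [1,\infty]$. Young's convolution inequality then gives
\[
\|u_q\|_r \;\leq\; \|\mathcal{F}^{-1}(\widetilde{\varphi}_q)\|_t \, \|u_q\|_s.
\]
The remaining task is to compute $\|\mathcal{F}^{-1}(\widetilde{\varphi}_q)\|_t$, which is purely a scaling calculation: since $\mathcal{F}^{-1}(\widetilde{\varphi}_q)(x) = \lambda_q^3 (\mathcal{F}^{-1}\widetilde{\varphi})(\lambda_q x)$, a change of variables yields
\[
\|\mathcal{F}^{-1}(\widetilde{\varphi}_q)\|_t \;=\; \lambda_q^{3(1-\frac{1}{t})} \, \|\mathcal{F}^{-1}\widetilde{\varphi}\|_t \;=\; \lambda_q^{3(\frac{1}{s}-\frac{1}{r})} \, \|\mathcal{F}^{-1}\widetilde{\varphi}\|_t.
\]
Combining the last two displays with $C = \|\mathcal{F}^{-1}\widetilde{\varphi}\|_t$, which depends only on $\widetilde{\varphi}$ and on the exponents but not on $q$, yields the claim.

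\textbf{Potential obstacles.} There is essentially no analytic obstacle; the only matters to track carefully are (i) verifying $t \geq 1$ from the hypothesis $r \geq s$, so that Young's inequality actually applies, (ii) handling the endpoint cases $t=1$ and $t=\infty$, which require only that $\mathcal{F}^{-1}\widetilde{\varphi} \in L^1 \cap L^\infty$ (true since $\widetilde{\varphi} \in \mathcal{S}$), and (iii) unifying the low block $q=-1$ with the dyadic blocks $q \geq 0$ by choosing a single cutoff $\widetilde{\varphi}$ that dominates both $\chi$ and $\varphi$. The resulting constant is independent of $q$ by construction, as required.
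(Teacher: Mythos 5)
Your proof is correct: the fattened-cutoff reproduction $u_q = \mathcal{F}^{-1}(\widetilde{\varphi}_q)*u_q$, Young's inequality with $\tfrac{1}{t}+\tfrac{1}{s}=1+\tfrac{1}{r}$, and the scaling computation of $\|\mathcal{F}^{-1}(\widetilde{\varphi}_q)\|_t$ together give exactly the claimed bound with a constant independent of $q$, and your handling of the low block $q=-1$ is fine (it could also simply be absorbed into the constant since $\lambda_{-1}$ is fixed). The paper itself states this Bernstein inequality without proof, deferring to the cited harmonic-analysis reference, and your argument is precisely the standard one given there, so there is nothing to compare beyond noting agreement.
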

	
	Also let us finally note that the Besov space $B^{s}_{p,q}$ is the space consisting of all tempered distributions $u$ satisfying
	$$
	\| u\|_{B^{s}_{p,q}}:=   \big\| \lambda_r^s \|u_r \|_p \big\|_{l^q} < \infty.
	$$
	\subsection{Energy flux}
	We use the following truncated energy equality for 3D NSE through the wavenumber $\lambda_q$:
	\begin{equation}\label{eq:trun_energ}
	{\textstyle \frac{1}{2} }\|u_{\leq q}(t) \|_2^2 = {\textstyle \frac{1}{2}} \|u_{\leq q}(t_0) \|_2^2 + \int_{t_0}^t \left( -\nu \|\nabla u_{\leq q}(s) \|_2^2  + \Pi_{\leq q}(s) \right)\, ds,
	\end{equation}
	where $\Pi_{\leq q}$ is the energy flux through the wavenumber $\lambda_q$:
	\begin{equation}
	\Pi_{\leq q} = \int \tr(  (u \otimes u)_{\leq q} \cdot \nabla  u_{\leq q}  ) \, dx.
	\end{equation}

	The next result was proven in \cite{ccfs}, which we use for much of this paper.
	\begin{proposition}[Flux]
		For any vector field $u \in L^2$ we we have the following estimate for the energy flux:
		\begin{equation}\label{eq:fluxatq_est}
		|\Pi_{\leq q}| \lesssim \bigg[ \sum_{r < q} \lambda_r^\frac{2}{3} \|u_r \|_3^2  \lambda_{|r-q|}^{-\frac{4}{3}} \bigg]^\frac{3}{2} +\bigg[ \sum_{r \geq  q} \lambda_r^\frac{2}{3} \|u_r \|_3^2   \lambda_{|r-q|}^{-\frac{2}{3}} \bigg]^\frac{3}{2}.
		\end{equation}
	\end{proposition}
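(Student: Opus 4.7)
The plan is to combine divergence-free cancellation with a dyadic Bony--paraproduct decomposition, following the strategy of \cite{ccfs}. The starting observation is that $\nabla\cdot u_{\leq q}=0$ together with integration by parts gives
$$
\int \tr(u_{\leq q}\otimes u_{\leq q}\cdot\nabla u_{\leq q})\,dx \;=\; \tfrac{1}{2}\int u_{\leq q}\cdot\nabla|u_{\leq q}|^{2}\,dx \;=\; 0,
$$
so the flux can be recast as $\Pi_{\leq q}=\int \tr(R_{q}\cdot\nabla u_{\leq q})\,dx$ with the Reynolds-stress-like quantity $R_{q}:=(u\otimes u)_{\leq q}-u_{\leq q}\otimes u_{\leq q}$. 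This reformulation is essential because $R_{q}$ picks up contributions only from frequency interactions in which the product $u_{p}\otimes u_{r}$ is not purely below the cutoff; a Fourier-support inspection shows that the surviving pairs are either ``boundary'' interactions with $\max(p,r)\sim q$, or ``high-high to low'' interactions with $|p-r|\leq 2$ and $\min(p,r)>q$.

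Expanding $u=\sum_{s}u_{s}$, for each admissible dyadic triple $(p,r,s)$ with $s\leq q$ H\"older yields
$$
\Bigl|\int \tr\bigl((u_{p}\otimes u_{r})_{\leq q}\cdot\nabla u_{s}\bigr)\,dx\Bigr|\;\lesssim\; \|u_{p}\|_{3}\,\|u_{r}\|_{3}\,\lambda_{s}\|u_{s}\|_{3}.
$$
Bernstein's inequality then allows one to trade $L^{3}$-norms of dyadic blocks for $L^{3/2}$-norms at the cost of a $\lambda^{2/3}$ weight, shifting the derivative and the low-pass projection onto the cheapest factor in the integrand. At the low-frequency end ($r<q$) two such trades generate the gain $(\lambda_{r}/\lambda_{q})^{4/3}=\lambda_{|r-q|}^{-4/3}$, whereas at the high-frequency end ($r\geq q$) the cancellation coming from low-passing the product of two well-separated high modes with $p\sim r$ produces only the weaker gain $\lambda_{|r-q|}^{-2/3}$, thereby explaining the different exponents in the two terms of \eqref{eq:fluxatq_est}.

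The last step is to distribute the three factors $\|u_{p}\|_{3}$, $\|u_{r}\|_{3}$, $\lambda_{s}\|u_{s}\|_{3}$ across the dyadic index $r$ by discrete H\"older on $\ell^{3/2}$, which produces the $3/2$-power outside each bracket and collapses the triple sum into a single dyadic sum on each side of the cutoff $q$, centered on the common frequency $r$ and weighted by $\lambda_{r}^{2/3}$. The principal obstacle is the bookkeeping: one has to verify that the Bernstein gains are indeed optimal, that the off-diagonal weights $\lambda_{|r-q|}^{-4/3}$ and $\lambda_{|r-q|}^{-2/3}$ survive the summation over $(p,r,s)$ without loss, and that the diagonal constraint $|p-r|\leq 2$ on the high-high-to-low side is respected so that the double sum collapses to a single dyadic index. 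This verification is carried out in \cite{ccfs}, and the argument carries over essentially verbatim.
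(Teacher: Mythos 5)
The paper offers no proof of this proposition at all --- it is quoted directly from \cite{ccfs} --- and your sketch reproduces essentially that reference's argument (divergence-free cancellation, the commutator/paraproduct decomposition of $(u\otimes u)_{\leq q}-u_{\leq q}\otimes u_{\leq q}$, H\"older and Bernstein on dyadic blocks, and discrete $\ell^3$-H\"older to generate the outer $3/2$ powers), ultimately deferring the bookkeeping to the same source, so the two approaches coincide. One imprecision worth correcting: Bernstein in $\mathbb{R}^3$ trades $\|u_r\|_{3}$ for $\|u_r\|_{3/2}$ at cost $\lambda_r^{3(2/3-1/3)}=\lambda_r$, not $\lambda_r^{2/3}$; the weights $\lambda_r^{2/3}$, $\lambda_{|r-q|}^{-4/3}$ and $\lambda_{|r-q|}^{-2/3}$ in \eqref{eq:fluxatq_est} arise from the commutator gain of a factor $\lambda_r/\lambda_q$ per low-frequency difference factor combined with the $\ell^3$ redistribution, rather than from that Bernstein trade alone.
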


	\section{Proof of main results}\label{section4}
	\subsection{Energy equality for weak-in-time Onsager spaces}
	
	Thanks to \eqref{eq:trun_energ}, Theorem \ref{theorem:weak_lbg} is a direct consequence of the following.

	\begin{proposition} 
		Suppose a weak solution $u$ on $[0,T]$ satisfies 
		\begin{equation} \label{eq:p4.1assumption}
		u \in L^{\beta,w}(0,T;B^{\frac{2}{\beta}+\frac{2}{p}-1}_{p,\infty} ),
		\end{equation}
		for some $\frac{2}{p} + \frac{1}{\beta}<1$ and $p > \beta>0$. Then we have
		\begin{equation}\label{eq:flux-above}
		\limsup_{q \to \infty} \int_{0}^T |\Pi_{\leq q}(s)| \, ds =0.
		\end{equation}
	\end{proposition}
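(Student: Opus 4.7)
The plan is to bound the flux using \eqref{eq:fluxatq_est}, use Littlewood--Paley interpolation to express $\|u_r\|_3$ in terms of $\|u_r\|_p$ (carrying the weak-in-time Besov information) and $\|u_r\|_2$ (carrying the energy-class information), and then integrate in time by H\"older, exploiting the strict inequality in the hypothesis to absorb the loss from the weak Lebesgue space and using the $L^2_tH^1_x$ regularity for high-frequency smallness. Throughout, let $a_r(t):=\lambda_r^{1/3}\|u_r(t)\|_3$, $g(t):=\|u(t)\|_{B^{\alpha}_{p,\infty}}$ (so $g\in L^{\beta,w}(0,T)$), and $\phi_r(t):=\lambda_r\|u_r(t)\|_2$, noting that $\sum_r\|\phi_r\|_{L^2_t}^2\lesssim \|\nabla u\|_{L^2_tL^2_x}^2<\infty$.

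First I would establish the key pointwise bound. The hypothesis $\frac{2}{p}+\frac{1}{\beta}<1$ with $\beta\geq 1$ forces $p>3$, so $L^3$ interpolates between $L^2$ and $L^p$: $\|u_r\|_3\leq \|u_r\|_2^{\theta}\|u_r\|_p^{1-\theta}$ with $\theta=\frac{2(p-3)}{3(p-2)}$ and $1-\theta=\frac{p}{3(p-2)}$. Plugging in $\|u_r\|_p\leq \lambda_r^{-\alpha}g$ and $\|u_r\|_2=\lambda_r^{-1}\phi_r$ and using $\alpha=\frac{2}{\beta}+\frac{2}{p}-1$, an algebraic check yields
\[
a_r(t)\;\leq\; \lambda_r^{-s}\,\phi_r(t)^{\theta}\,g(t)^{1-\theta},\qquad s=\frac{2(p-\beta)}{3\beta(p-2)}>0,
\]
where $s>0$ uses $p>\beta$. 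Substituting into \eqref{eq:fluxatq_est} gives
\[
|\Pi_{\leq q}(t)|\;\lesssim\; g(t)^{3(1-\theta)}\bigg[\sum_r\lambda_r^{-2s}\phi_r(t)^{2\theta}K_q(r)\bigg]^{3/2},
\]
where $K_q(r)$ stands for the kernel in \eqref{eq:fluxatq_est} and satisfies $\sum_r K_q(r)\lesssim 1$ with exponential decay in $|r-q|$.

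Next, I would integrate in time. The cornerstone is that $\frac{1}{\beta}+\frac{2}{p}<1$ is equivalent to $\beta>\frac{p}{p-2}=3(1-\theta)^{-1}$; thus on the bounded interval $[0,T]$ one may fix $\beta'<\beta$ with $\beta'>\frac{p}{p-2}$ and invoke $L^{\beta,w}(0,T)\hookrightarrow L^{\beta'}(0,T)$ at the cost of an arbitrarily small loss. H\"older with conjugate pair $(a,a')$, $a=\frac{\beta'}{3(1-\theta)}>1$, gives
\[
\int_0^T|\Pi_{\leq q}|\,dt\;\lesssim\;\|g\|_{L^{\beta'}_t}^{3(1-\theta)}\,\bigg\|\sum_r\lambda_r^{-2s}\phi_r^{2\theta}K_q(r)\bigg\|_{L^{a'}_t}^{3/2}.
\]
Minkowski passes the $L^{a'}_t$ norm inside the sum; the ensuing $\|\phi_r\|_{L^{2\theta a'}_t}$ is obtained by interpolating between the summable $\|\phi_r\|_{L^2_t}$ and the trivial bound $\|\phi_r\|_{L^\infty_t}\leq \lambda_r\|u\|_{L^\infty_tL^2_x}$.

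Finally, unwinding the exponents, each summand carries a factor $\lambda_r^{-c}$ with $c>0$---again by the strict inequality---and a prefactor of the form $\|\phi_r\|_{L^2_t}^{2/a'}$; combined with the exponential decay of $K_q(r)$ in $|r-q|$, the full sum decays geometrically as $q\to\infty$, and since $\sum_r\|\phi_r\|_{L^2_t}^2<\infty$ implies $\|\phi_r\|_{L^2_t}\to 0$, the limit is $0$, which is the claim. The main obstacle is the bookkeeping: one must thread the needle so that the weak-in-time loss $\beta'<\beta$ is absorbed by the strict hypothesis, the Lebesgue index $2\theta a'$ demanded of $\phi_r$ lies in $[2,\infty]$ so as to be reachable by interpolation between $L^2_t$ and $L^\infty_t$, and the net $\lambda_r$-exponent on each summand is strictly negative---all three hold at once precisely because $\alpha$ takes the critical value $\frac{2}{\beta}+\frac{2}{p}-1$.
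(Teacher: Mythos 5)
Your skeleton---the flux bound \eqref{eq:fluxatq_est}, the interpolation $\|u_r\|_3\le\|u_r\|_2^{\theta}\|u_r\|_p^{1-\theta}$ with $3(1-\theta)=\frac{p}{p-2}$, H\"older in time, and the enstrophy supplying the final decay---is the same as the paper's, and your computation $s=\frac{2(p-\beta)}{3\beta(p-2)}$ is correct. The gap is in the one step that is specific to the weak-in-time hypothesis: you replace $L^{\beta,w}(0,T)$ by $L^{\beta'}(0,T)$ with $\beta'<\beta$ and assert that the strict inequalities absorb the loss. They do not. The spatial exponent $\alpha=\frac{2}{\beta}+\frac{2}{p}-1$ is exactly critical, so the gain $s>0$ coming from $p>\beta$ is fully consumed only at $\beta'=\beta$. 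Concretely, with $a=\frac{\beta'}{3(1-\theta)}=\frac{\beta'(p-2)}{p}$, the norm demanded of $\phi_r$ after Minkowski is $L^{m}_t$ with $m=3\theta a'$ (not $2\theta a'$, though that is cosmetic), and interpolating $\|\phi_r\|_{L^{m}_t}\le\|\phi_r\|_{L^2_t}^{2/m}\|\phi_r\|_{L^\infty_t}^{1-2/m}$ with $\|\phi_r\|_{L^\infty_t}\lesssim\lambda_r$ leaves each summand with the net power
\[
-2s+2\theta-\frac{4}{3a'}\;=\;\frac{4p}{3(p-2)}\Big(\frac{1}{\beta'}-\frac{1}{\beta}\Big)\;>\;0
\]
of $\lambda_r$, for \emph{every} admissible $\beta'<\beta$. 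Since the only information on $\phi_r=\lambda_r\|u_r\|_2$ is the qualitative $\|\phi_r\|_{L^2_t}\to0$ with no rate, already the term $r=q$ contributes $\lambda_q^{\delta}\|\phi_q\|_{L^2_t}^{4/(3a')}$ with $\delta>0$, which need not vanish as $q\to\infty$. So the sum does not go to zero and the argument does not close; the claimed ``factor $\lambda_r^{-c}$ with $c>0$'' has the wrong sign.

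The missing idea is how to exploit the weak-$L^\beta$ bound without dropping to a strictly smaller strong Lebesgue exponent. The paper splits time by the level set $E_q=\{f\ge\lambda_q^{2/\beta}\}$ (of measure $\lesssim\lambda_q^{-2}$) and uses the distribution function with two different powers of $f$ straddling $\beta$: on $E_q$ it integrates $f^{\beta/(1+\epsilon)}$, whose tail integral starting at $\lambda_q^{2/\beta}$ gives a \emph{gain} $\lambda_q^{-2\epsilon/(1+\epsilon)}$, and on the complement $f^{\beta/(1-\epsilon)}$, whose truncated integral gives a controlled \emph{loss} $\lambda_q^{2\epsilon/(1-\epsilon)}$. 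These powers of $\lambda_q$ exactly cancel the $\lambda_r^{\pm2\epsilon'}$ shifts in the accompanying H\"older exponents, leaving net power zero against the kernel so that the enstrophy factor alone drives the limit. If you wish to avoid the splitting, the correct lossless substitute is Lorentz-space (O'Neil) duality, pairing $f^{p/(p-2)}\in L^{a,\infty}_t$ with $a=\frac{\beta(p-2)}{p}>1$ against the remaining factor placed in $L^{a',1}_t$; verifying that the remaining factor lies in that Lorentz space is where the condition $\beta<p$ reappears, and it requires genuinely more care than the embedding you invoked.
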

	\begin{proof}
		Throughout the proof we denote $f = \| u \|_{B^{\alpha}_{p,\infty}} $ and $\alpha =\frac{2}{\beta} + \frac{2}{p}-1 $. Let us also define 
		\[
		E_q= \{s\in [0,T]: f(s) \geq \lambda_q^\frac{2}{\beta}\}.
		\]
		It follows from \eqref{eq:p4.1assumption} that $|E_q| \lesssim \lambda_q^{-2}$.
		With this we split the energy flux as 
		$$
		\int_{0}^T |\Pi_{\leq q}(s)|\, ds \leq  \int_{E_q} |\Pi_{\leq q}(s)| \,ds+\int_{[0,T]\setminus E_q} |\Pi_{\leq q}(s)| \,ds.
		$$
		
		\textbf{Step 1:} Bounding $\int_{E_q} |\Pi_{\leq q}(s)| \,ds$.

		We first use the H\"older interpolation inequality to obtain
		$$
		\int_{E_q} |\Pi_{\leq q}(s)| \,ds  \lesssim \int_{E_q} \sum_{r} \lambda_r \|u_r \|_2^\frac{2p-6}{p-2} \| u_r \|_p^\frac{p}{p-2}\lambda_{|r-q|}^{-\frac{2}{3}} \, ds.
		$$
		It follows from the definition of Besov norms that
		\begin{equation}
		\int_{E_q} |\Pi_{\leq q}(s)|\, ds  \lesssim \int_{E_q} \sum_{r  } \|u_r \|_2^\frac{2p-6}{p-2} \lambda_r^{1-\frac{\alpha p}{p-2}}  \lambda_{|r-q|}^{-\frac{2}{3}}  f(s)^{\frac{ p}{p-2}}\, ds.
		\end{equation}

		Since $\frac{ p}{p-2} < \beta <p$, we can choose $\epsilon>0$ small enough so that
		\[
		\frac{p}{p-2} < \frac{\beta}{1+\epsilon}, \quad \epsilon_1:=  \frac{2}{p-2} \left(\frac{p}{\beta} -1\right) + 2\epsilon' > 0,
		\quad \text{and} \quad \epsilon_2: = \frac{2}{p-2} \left(\frac{p}{\beta} -1\right) - 2\epsilon' > 0.
		\]
		where $\epsilon'= \frac{ p}{\beta(p-2)}\epsilon$. Now we can use H\"older's inequality to raise the power of $f$.
		\begin{align*}
		\int_{E_q} |\Pi_{\leq q}(s)| \,ds  \lesssim & \sum_{r   }\lambda_{|r-q|}^{-\frac{2}{3} }  \bigg[\int_{E_q}  \lambda_r^{2}   \|u_r \|_2^2 \,ds \bigg]^{1- \frac{  p}{\beta (p-2)} -\epsilon' }  \\
		& \cdot \lambda_r^{2\epsilon' }\bigg[\int_{E_q} f^{\frac{\beta}{1+\epsilon}} \,  ds \bigg]^{\frac{ p}{\beta(p-2)}+\epsilon'} \cdot \sup_t \|u(t) \|_2^{\epsilon_1},
		\end{align*}
		where we note that $1- \frac{  p}{\beta (p-2)} -\epsilon' > 0$ thanks to the bound $\frac{p}{p-2} < \frac{\beta}{1+\epsilon} $.

		Due to \eqref{eq:p4.1assumption}, we have the following bound on the distribution function:
		\[
		\lambda_f(t)=|\{s:|f(s)|>t\}| \lesssim t^{-\beta}.
		\]
		Hence we obtain
		\begin{align*}
		\int_{E_q} f^{\frac{\beta}{1+\epsilon}} \,  ds = \frac{\beta}{1+\epsilon}\int_{\lambda_q^\frac{2}{\beta}}^\infty t^{\frac{\beta}{1+\epsilon} -1} \lambda_f(t) \,dt \lesssim  \int_{\lambda_q^\frac{2}{\beta}}^\infty t^{\frac{ - \epsilon \beta}{1+\epsilon} -1}  \, dt \lesssim \lambda_q^{\frac{ - 2\epsilon  }{1+\epsilon}  } .
		\end{align*}
		Since $\frac{p}{\beta(p-2)}+\epsilon' = \frac{\epsilon'(1+\epsilon)}{\epsilon}$, then we have
		\[
		\lambda_r^{2\epsilon' }\bigg[\int_{E_q} f^{\frac{\beta}{1+\epsilon}} \,  ds \bigg]^{\frac{ p}{\beta(p-2)}+\epsilon'} \lesssim1.
		\]
		
		Using this bound and the fact that the energy is bounded we arrive at
		$$
		\int_{E_q} |\Pi_{\leq q}(s)| \,ds \lesssim   \sum_{r   }\lambda_{|r-q|}^{-\frac{2}{3} }  \bigg[\int_{E_q}  \lambda_r^{2}   \|u_r(s) \|_2^2 \,ds \bigg]^{1- \frac{  p}{\beta (p-2)} -\epsilon'} \to 0,
		$$
		as $q\to 0$ due to the fact that
		\[
		\int_0^T \|\nabla u(s)\|_2^2\, ds < \infty.
		\]
		%Since $1- \frac{  p}{\beta (p-2)} -\epsilon'  >0$ due to $\epsilon,\epsilon' <<1$ by Jensen inequality we obtain $\int_{E_q} |\Pi_{\leq q}(s)| ds\to 0$ as $q \to \infty$.
		
		\textbf{Step 2:} Bounding $\int_{[0,T]\setminus E_q} |\Pi_{\leq q}(s)| ds$.
		
		Similarly to Step 1 we have 
		\begin{align*}
		\int_{ [0,T]\setminus E_q }|\Pi_{\leq q}(s)| ds  & \lesssim \int_{[0,T]\setminus E_q} \sum_{r  } \lambda_r \|u_r \|_2^\frac{2p-6}{p-2} \| u_r \|^\frac{p}{p-2}\lambda_{|r-q|}^{-\frac{2}{3}} \, ds\\
		&\lesssim \int_{[0,T]\setminus E_q} \sum_{r  } \lambda_r^{1-\frac{\alpha p}{p-2}} \|u_r \|_2^\frac{2p-6}{p-2} \lambda_{|r-q|}^{-\frac{2}{3}}  f(s)^\frac{p}{p-2} \,ds .
		\end{align*}
		
		H\"older's inequality in time gives
		\begin{align*}
		\int_{ [0,T]\setminus E_q }|\Pi_{\leq q}(s)|\, ds  \lesssim  & \sum_{r  }\lambda_{|r-q|}^{-\frac{2}{3}}  \bigg[   \int_{[0,T]\setminus E_q} \lambda_r^2\|u_r \|_2^{2 } \bigg]^{1- \frac{  p}{\beta (p-2)}  +\epsilon'} \\
		& \cdot \lambda_r^{-2\epsilon'} \bigg[ \int_{[0,T]\setminus E_q} f^{ \frac{\beta}{1-\epsilon}}\, ds \bigg]^{\frac{ p}{\beta(p-2) } -\epsilon'} \cdot \sup_t \|u(t) \|_2^{\epsilon_2},
		\end{align*}
		where $\epsilon'$ and $\epsilon_2$ are as above. Using the distribution function again, we obtain
		\begin{align*}
		\int_{[0,T]\setminus E_q} f^{\frac{\beta}{1-\epsilon}}  \, ds = \frac{\beta}{1+\epsilon}\int^{\lambda_q^\frac{2}{\beta}}_0  t^{\frac{\beta}{1-\epsilon} -1} \lambda_f(t)\, dt \lesssim  \int^{\lambda_q^\frac{2}{\beta}}_0 t^{\frac{  \epsilon \beta}{1-\epsilon} -1} \,  dt \lesssim \lambda_q^{\frac{  2\epsilon  }{1-\epsilon}  }.
		\end{align*}
		Since $\frac{p}{\beta(p-2)}-\epsilon' = \frac{\epsilon'(1-\epsilon)}{\epsilon}$, we have
		\[
		\lambda_r^{-2\epsilon' }\bigg[\int_{E_q} f^{\frac{\beta}{1-\epsilon}} \,  ds \bigg]^{\frac{ p}{\beta(p-2)}-\epsilon'} \lesssim1.
		\]
		
		Thus, reasoning as before,
		$$
		\int_{ [0,T]\setminus E_q }|\Pi_{\leq q}(s)|\, ds   \lesssim \sum_{r  } \lambda_{|r-q|}^{-\frac{2}{3}}  \bigg[   \int_{[0,T]\setminus E_q} \lambda_r^2 \|u_r(s) \|_2^{2 } \, ds\bigg]^{1- \frac{  p}{\beta (p-2)}  +\epsilon'} \to 0,
		$$
		as $q \to \infty$.
	\end{proof}

	\subsection{Extension in the region \texorpdfstring{$\frac{2}{p} + \frac{1}{\beta} \geq  1$ and $0<\beta\leq 3$ }{2/p + 1/beta <=  1}}
	
	In this case the proof is much simpler. We can use Sobolev or Bernstein's inequalities since the intermittency dimension $d$ is expected to be $0$. 
	\begin{proof}[Proof of Theorem \ref{theorem:type2}]
		Let us consider two sub-cases: $p\geq 3$ and $ 1 \leq p < 3$.
		
		First of all, when $p\geq 3$, by H\"older's inequality we obtain:
		$$
		\|u_r \|_3^3 \leq \|u_r \|_2^\frac{2p-6}{p-2}   \|u_r \|_p^\frac{p}{p-2}  .
		$$
		Since $\frac{p}{p-2} \geq \beta $, thanks to Bernstein's inequality we have
		$$
		\|u_r \|_3^3 \lesssim \|u_r(s) \|_2^{3-\beta}   \ \|u_r(s) \|_p^\beta \lambda_r^{\frac{3}{2} + \frac{3\beta}{p} -\frac{3\beta}{2} }.
		$$
		
		In the second case, when $ 1 \leq p < 3$, direct application of Bernstein's inequality also amounts to 
		$$
		\|u_r \|_3^3 \lesssim \|u_r(s) \|_2^{3-\beta}     \|u_r(s) \|_p^\beta \lambda_r^{\frac{3}{2} + \frac{3\beta}{p} -\frac{3\beta}{2} }.
		$$
		Note that in both cases the power of $\lambda_r$ is $\frac{3}{2} + \frac{3\beta}{p} -\frac{3\beta}{2} \geq 0$ due to the fact that $\frac{2}{p} + \frac{1}{\beta} \geq 1$.
		Therefore for any  $  p \geq 1$ we can proceed as
		\begin{equation}
		\int_{0}^{T}|\Pi_{\leq q}(s)| ds \leq  \int_{0}^{T^*}    \sum_{r  } \|u_r(s) \|_2^{3-\beta}   \ \|u_r(s) \|_p^\beta \lambda_r^{\frac{5}{2} + \frac{3\beta}{p} -\frac{3\beta}{2} }     \lambda_{|r-q|}^{-\frac{2}{3}}  ds.
		\end{equation}
		Since $0 <\beta \leq 3$ and the energy is bounded, the Dominated Convergence Theorem implies that
		$$
		\int_{0}^{T}|\Pi_{\leq q}(s)| ds  \lesssim \int_{0}^{T^*}    \sum_{r  }    \ \|u_r(s) \|_p^\beta \lambda_r^{\frac{5}{2} + \frac{3\beta}{p} -\frac{3\beta}{2} }     \lambda_{|r-q|}^{-\frac{2}{3}}  ds\to 0 \qquad \text{as  } q \to \infty.
		$$
		Therefore energy equality holds under condition \eqref{eq:type2}. 
		
	\end{proof}

	{}

\end{document}